\newcommand{\al}{\alpha}
\newcommand{\ep}{\epsilon}
\newcommand{\pbp}{\partial \bar{\partial}}
\newcommand{\vol}{\operatorname{vol}}
\theoremstyle{plain}
\newtheorem{theorem}{Theorem}[section]
\newtheorem{lemma}[theorem]{Lemma}
\theoremstyle{definition}
\newtheorem{rema}[theorem]{Remark}
\newtheorem{defi}[theorem]{Definition}
\numberwithin{equation}{section}
\title[Weighted Interpolation from certain singular hypersurfaces]{Weighted Interpolation from certain singular affine hypersurfaces}
\author[Pingali]{Vamsi P. Pingali}
\address{Department of Mathematics\\
412 Krieger Hall, Johns Hopkins University\\ Baltimore, MD 21218, USA}
\email{vpingali@math.jhu.edu}
\begin{document}
\maketitle

\begin{abstract}
We prove that square integrable holomorphic functions (with respect to a plurisubharmonic weight) can be extended in a square integrable manner from certain singular hypersurfaces (which include uniformly flat, normal crossing divisors) to entire functions in affine space. This gives evidence to a conjecture \cite{PV} regarding the positivity of the curvature of the weight under consideration.
\end{abstract}

\section{Introduction}
\indent Given a countable set of points $\Gamma \subset\mathbb{C}$, one can ask whether it is a set of interpolation for the Bargmann Fock space, i.e., whether for all collections of complex numbers $\{ f_{\gamma}\ ;\ \gamma \in \Gamma\}$ such that 
\[
\displaystyle \sum _{\gamma} \vert f_{\gamma} \vert^2 e^{-\vert \gamma \vert^2} <\infty
\]
one can find  a holomorphic function $F$ such that 
\[
F(\gamma) = f_{\gamma} \text{ for all }\gamma \in \Gamma \quad \text{and} \quad \displaystyle \int_{\mathbb{C}} \vert F \vert^2 e^{-\vert z \vert^2} d\mu_{Leb} <\infty.
\]
 More generally, one may ask the same question with the weight $e^{-\vert z \vert^2}$ replaced by $e^{-\phi}$ for some locally integrable function $\phi$.  If we impose that $\phi$ is $\mathscr{C}^2$-smooth and satisfies $C^{-1} \leq \Delta \phi \leq C$, then a result due to Seip in the classical Bargmann-Fock setting, and to a number of authors in general, states that $\Gamma$ is a set of interpolation for the generalized Bargmann-Fock space if and only if 
\begin{enumerate}
\item $\Gamma$ is uniformly separated with respect to the Euclidean metric, and 
\item the so-called upper density 
\[
D^+_{\phi}(\Gamma) := \limsup _{r \to \infty} \sup _{z \in \mathbb{C}} \frac{\# (\Gamma \cap D(z,r))}{\int _{D(z,r)} \Delta \phi d\mu _{\text{Leb}}}
\]
is strictly less than $1$.
\end{enumerate}
 (See \cite{se} and the references therein). 
 
Higher dimensional generalisations of this problem are of active interest and current research.  \cite{osv, bt, ber, dem, PV}. In higher dimensions, the problem is characterize those hypersurfaces $W \subset \mathbb{C}^n$ with the property that given any holomorphic function $f :W \to \mathbb{C}$ satisfying $\displaystyle \int _{W} \vert f \vert^2 e^{-\phi} < \infty$, there is a holomorphic function $F$ that extends $f$ and satisfies $\displaystyle \int _{\mathbb{C}^n} \vert F \vert^2 e^{-\phi} < \infty$.  If we assume that $\phi$ is smooth and satisfies  
\begin{equation}\label{ddbar-bound}
C^{-1}\sqrt{-1}\pbp|z|^2  \leq \sqrt{-1}\pbp \phi \leq C \sqrt{-1}\pbp |z|^2,
\end{equation}
then sufficient conditions exist:  In \cite{osv} sufficient conditions were provided to solve this problem for smooth $W$.  Akin to ``uniform separation", a geometric notion called ``uniform flatness" was defined, as was a corresponding generalisation of the notion of ``upper density".  It was then proved that if $W$ is uniformly flat and has density strictly less than $1$, then $W$ is an interpolation hypersurface.  The reason for the lower bounds on $\pbp \phi$ is that the desired interpolating function was constructed by ``patching up" local extensions using the H\"ormander theorem (which in turn  requires strict positivity of the curvatures involved).   The upper bound on $\pbp \phi$ is related to the approach taken in \cite{osv} to extend the data from $W$ to a small neighbourhood.  In \cite{PV}, the same result for smooth hypersurfaces $W$ was established for any plurisubharmonic weight $\phi$. The main tool used was an Ohsawa-Takegoshi type extension theorem (theorem \ref{ot-thm}).   As far as the necessity, little is known.  The necessity of the density condition is wide open, but it was also shown in \cite{PV} that uniform flatness is not a necessary condition as soon as $n \ge 2$.

In \cite{PV} the problem of weighted interpolation for singular $W$ was also studied. A corresponding notion of uniform flatness was defined for singular varieties and was proven to be one of the sufficient conditions (the other one being upper density less than $1$) required for the solution of the interpolation problem provided the weight $\phi$ satisfies \eqref{ddbar-bound}.  Once again the H\"ormander theorem was used in the singular case to patch up local extensions. It was conjectured that the condition on the weight may be weakened. The purpose of this paper is to provide evidence for this conjecture by weakening the strictness of plurisubharmonicity of $\phi$ for certain singular hypersurfaces (that include the case of ``uniformly flat" simple normal crossing divisors). The strategy of proof is to use the Ohsawa-Takegoshi type theorem \ref{ot-thm} (as in \cite{PV} for the smooth case) to solve the problem of weighted interpolation. 

\textbf{Acknowledgements}: The author is greatly indebted to Dror Varolin for fruitful discussions and for a careful reading of the paper.

\section{Statements of results}
\indent Let $\omega_0=\frac{\sqrt{-1}}{2} \sum dz^i \wedge d\bar{z}^i$ be the Euclidean K\"ahler form on $\mathbb{C}^n$. We recall a few definitions before stating our main theorem. \\
\indent Firstly, we define uniform flatness \cite{osv} for smooth hypersurfaces in $\mathbb{C}^n$.
\begin{defi}
A smooth hypersurface $W \subset \mathbb{C}^n$ is said to be uniformly flat if there exists a tubular neighbourhood of $W$ of radius $\epsilon_0$ where $\epsilon_0$ is a positive constant.
\end{defi}
Next we recall the definition of  the same concept in the special case of singular hypersurfaces of the type $W=T^{-1} (0)$ where $T=T_1 T_2 \ldots T_k $ is an entire function such that $W_i = T_i ^{-1} (0)$ is a smooth, uniformly flat hypersurface.
\begin{defi}
A singular hypersurface $W=\displaystyle \cup_{i=1} ^{k} W_i$ where $W_i$ are smooth hypersurfaces is said to be uniformly flat if the $W_i$ are uniformly flat, and there is a positive constant $\epsilon _0$ (which is the radius of the uniform tubular neighbourhood of the $W_k$) so that the angles of intersection $W_k \cap W_l$ lie in $[\epsilon _0, \pi - \epsilon _0]$.
\end{defi}
Finally, we define the concept of density \cite{osv}.
\begin{defi}
Let $W$ be a hypersurface.  The $(1,1)$-form $\Upsilon _r^W = [W]*\frac{\mathbf{1}_{B(0,r)}}{\vol(B(0,r))}$ is called the density tensor of $W$.  If $\phi :\mathbb{C}^n \rightarrow \mathbb{R}$ is a plurisubharmonic function,  then with $\phi_r (z) = \displaystyle \fint _{B(z,r)} \phi (\zeta) \omega _0 ^n (\zeta)$, the number 
\begin{gather}
D^+_{\phi}(W)= \inf \left \{ \alpha  \geq 0 \ ;\  \sqrt{-1} \pbp \phi _r - \tfrac{1}{\alpha}\Upsilon _r^W \ge 0 \text{ for all }r>>0 \right \}
\end{gather}
is called the upper density of $W$.
\end{defi}
\begin{rema}
The condition that the density is less than $1$ is equivalent to saying that there is a positive constant $\delta$ so that $ \sqrt{-1} \pbp \phi _r \geq (1+\delta)\Upsilon _r^W $ for $r >>1$.
\end{rema}
\begin{rema}
The reason the weighted average $\phi _r$ is used (as opposed to $\phi$) is to make the theorem look similar to its one-dimensional version, where the weight $\phi _r$ appears naturally in the proof of necessity of the density.  (It arises there from the use of Jensen's Formula.) Thanks to Lemma 1.1 in \cite{PV} which is in turn taken from \cite{quimbo, li}, it turns out that if $\pbp \phi$ is bounded above by a multiple of the Euclidean metric, we may replace $\phi _r$ by $\phi$ and get equivalent $L^2$ norms. 
\label{qui}
\end{rema}
We are finally in a position to state our main theorem.
\begin{theorem}
Let $\phi : \mathbb{C}^n \rightarrow \mathbb{R}$ be a $\mathcal{C}^2$-smooth plurisubharmonic function satisfying $0\leq \sqrt{-1} \partial \bar{\partial} \phi_r \leq C \omega$ for some positive constant $C$, and let $W \subset \mathbb{C}^n$ be a possibly singular uniformly flat, complex hypersurface such that $W = T^{-1}(0)$ where $T=T_1 T_2 \ldots T_k$ is a holomorphic function with $T_i ^{-1} (0)$ being uniformly flat, smooth complex hypersurfaces such that $D^{+}_{\phi} (W) <1$. Then any holomorphic function $f$ on $W$ such that $\displaystyle \int _W \vert f \vert^2 e^{-\phi}  \omega_0 ^{n-1} < \infty$ can be extended to a holomorphic entire function $F$ satisfying $\displaystyle \int _{\mathbb{C}^n} \vert F \vert^2 e^{-\phi}  \omega_0 ^n \leq K\displaystyle \int _W \vert f \vert^2 e^{-\phi}  \omega_0 ^{n-1} $ for some constant $K>0$ independent of $f$ but possibly dependent on $C$.
\label{main}
\end{theorem}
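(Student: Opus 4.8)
The plan is to argue by induction on the number $k$ of smooth components of $W$. When $k=1$ the statement is exactly the interpolation theorem of \cite{PV} for smooth uniformly flat hypersurfaces, whose proof consists of applying the Ohsawa--Takegoshi type theorem \ref{ot-thm} to extend $f$ from $W_1=T_1^{-1}(0)$; there the hypothesis $D^{+}_{\phi}(W_1)<1$ is what makes the curvature requirement of theorem \ref{ot-thm} hold (one extends with respect to a weight built from $\phi_r$ and $\log|T_1|^2$, and by the remark following the definition of density this weight is plurisubharmonic precisely because $\sqrt{-1}\pbp\phi_r\geq(1+\delta)\Upsilon_r^{W_1}$), while the upper bound $\sqrt{-1}\pbp\phi_r\leq C\omega$ together with Remark \ref{qui} lets one pass between the norms defined by $\phi$, by $\phi_r$ and by the regularised weight. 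So assume $k\geq 2$ and that the theorem is known for unions of at most $k-1$ of the $W_i$.

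Set $T'=T_1\cdots T_{k-1}$ and $W'=(T')^{-1}(0)=W_1\cup\cdots\cup W_{k-1}$. Then $W'$ is again a uniformly flat hypersurface of the required form, and since $[W']\leq[W]$ we have $D^{+}_{\phi}(W')\leq D^{+}_{\phi}(W)<1$. Given $f$ on $W$ with $\int_W|f|^2e^{-\phi}\,\omega_0^{n-1}<\infty$, restrict to $W'$ and apply the inductive hypothesis to get an entire $F'$ with $F'|_{W'}=f|_{W'}$ and $\int_{\mathbb{C}^n}|F'|^2e^{-\phi}\,\omega_0^{n}\leq K'\int_{W'}|f|^2e^{-\phi}\,\omega_0^{n-1}\leq K'\int_W|f|^2e^{-\phi}\,\omega_0^{n-1}$. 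Put $g:=(f-F')|_{W_k}$, a holomorphic function on $W_k$ which vanishes along each $W_j\cap W_k$ ($j<k$), since $F'$ already interpolates $f$ on $W'\supseteq W_j$. Because the uniform flatness of $W$ forces these intersections to be transverse (the angles lie in $[\epsilon_0,\pi-\epsilon_0]$), the function $T'|_{W_k}$ vanishes to at most the order that $g$ does along the divisor $W'\cap W_k$ of $W_k$, so $h:=g/(T'|_{W_k})$ is a well-defined holomorphic function on $W_k$, and a local computation at the crossings shows that $h$ is square integrable against the relevant weight. One then extends $h$ from $W_k$ to an entire $H$ with $\int_{\mathbb{C}^n}|T'|^2|H|^2e^{-\phi}\,\omega_0^{n}\leq C\int_{W_k}|T'|^2|h|^2e^{-\phi}\,\omega_0^{n-1}=C\int_{W_k}|g|^2e^{-\phi}\,\omega_0^{n-1}$, and sets $F:=F'+T'H$. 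This $F$ is entire, restricts to $f|_{W'}$ on $W'$ (where $T'=0$) and to $F'|_{W_k}+g=f|_{W_k}$ on $W_k$, hence to $f$ on all of $W$. For the norm estimate, bound $\int|F|^2e^{-\phi}$ by $2\int|F'|^2e^{-\phi}+2\int|T'H|^2e^{-\phi}$; the first term is $\leq 2K'\int_W|f|^2e^{-\phi}$, and in the second the right side of the extension estimate is $\leq 4\int_{W_k}|f|^2e^{-\phi}+4\int_{W_k}|F'|^2e^{-\phi}$, where the last term is controlled by $\int_{\mathbb{C}^n}|F'|^2e^{-\phi}$ via a restriction (trace) inequality for the uniformly flat hypersurface $W_k$ — this is the step where the upper bound on $\sqrt{-1}\pbp\phi_r$ is used, exactly as in Remark \ref{qui}. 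Collecting constants gives $\int_{\mathbb{C}^n}|F|^2e^{-\phi}\,\omega_0^{n}\leq K\int_W|f|^2e^{-\phi}\,\omega_0^{n-1}$ with $K$ depending only on $K'$, $C$ and the flatness constants.

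The whole weight of the argument sits in the middle step: producing the extension $H$ of $h$ from $W_k$ with the \emph{gained} estimate $\int|T'|^2|H|^2e^{-\phi}\leq C\int_{W_k}|g|^2e^{-\phi}$. Unlike the smooth case, the weight that naturally governs this is $\phi-\log|T'|^2$, which blows up along $W'$ and is not plurisubharmonic (its curvature current carries the negative contribution $-[W']$), so theorem \ref{ot-thm} cannot be applied off the shelf. The remedy is to run theorem \ref{ot-thm} on $W_k$ with a regularised weight, replacing $\phi$ by $\phi_r$ and $\log|T'|^2$ by a smoothing $(\log|T'|^2)_r$ (and taking the $T_j$ to be the normalised defining functions coming from the uniform flatness of the $W_j$, so these smoothings are well behaved): the resulting weight is plurisubharmonic on all of $\mathbb{C}^n$ precisely because $\sqrt{-1}\pbp\phi_r-\Upsilon_r^{W'}\geq(1+\delta)\Upsilon_r^{W}-\Upsilon_r^{W'}\geq(1+\delta)\Upsilon_r^{W_k}\geq 0$ — in other words one verifies $D^{+}$ of this new weight relative to $W_k$ is again $<1$, so theorem \ref{ot-thm} applies. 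Turning the resulting estimate back into the displayed one requires comparing the regularised weight with $e^{-\phi}|T'|^2$ on both $W_k$ and $\mathbb{C}^n$, which is done using Remark \ref{qui} (i.e. Lemma~1.1 of \cite{PV}) and the transversality of the crossings. This reconciliation — making $\log|T'|^2$ admissible as a weight correction by spending the density hypothesis, while keeping all the $L^2$ norms equivalent to the unregularised ones — is the main obstacle, and it is where the three hypotheses (plurisubharmonicity of $\phi$ with $D^{+}_{\phi}(W)<1$, the upper Hessian bound, and the angle condition in the definition of uniform flatness) get used together.
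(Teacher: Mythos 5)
Your proposal is correct and follows essentially the same route as the paper: induction over the components, extension of the correction term via theorem \ref{ot-thm} with weights regularised by ball averages (the density hypothesis supplying the curvature inequality), a trace inequality for the uniformly flat hypersurface to control $\int_{W_k}|F'|^2e^{-\phi}$ (the paper's Lemma \ref{loop}), and the comparability of $|T'|^2e^{-\fint\log|T'|^2}$ with a constant near the crossings (the paper's Lemma \ref{est}, proved there via lemmas 4.11 and 4.16 of \cite{PV}). The only difference is bookkeeping: you divide by $T'$ and apply theorem \ref{ot-thm} on the smooth $W_k$ with the twisted weight $\phi_r-(\log|T'|^2)_r$, whereas the paper keeps $\kappa=\phi_r$ and extends $f-F_i$ from the whole union $Z_{i+1}$ with $\eta=\fint\log|T_1\cdots T_{i+1}|^2$, the two applications being equivalent since the integrals over $W_1,\ldots,W_{i}$ drop out and the weight factor splits off exactly your $|T'|^{-2}$ twist.
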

\begin{rema}
The $\mathcal{C}^2$-smoothness condition of theorem \ref{main} is actually not necessary. Indeed, there is a sequence of plurisubharmonic functions $\phi _{\ep}$ decreasing pointwise to $\phi_r$. There is a corresponding sequence of holomorphic functions $F_{\ep}$ extending $f$ satisfying $\displaystyle \int _{\mathbb{C}^n} \vert F_{\ep} \vert^2 e^{-\phi _{\ep}} \omega _0 ^n \leq K \int _{W} \vert f \vert^2 e^{-\phi _{\ep}} \omega _0 ^{n-1} < K\int _{W} \vert f \vert^2 e^{-\phi} \omega _0 ^{n-1}$. Thus a subsequence $F_{\ep} e^{-\phi _{\ep} /2}$ goes weakly to a function $G$ in $L^2$. This means that a subsequence $F_{\ep}$ converges to some function $F$ almost everywhere and hence weakly in $L^2$. This means that $F$ is holomorphic, extends $f$, and satisfies the desired estimates. 
\label{rem1}
\end{rema}
\begin{rema}
For the purposes of algebraic geometry it is important to let $\phi$ be singular with non-zero Lelong number. Our proof relies strongly on the aforementioned lemma in \cite{quimbo, li} which in turn requires an upper bound on $\pbp \phi _r$. Removing this upper bound seems to require fundamentally new ideas.
\label{rem2}
\end{rema}
\section{Proof of the main theorem}
\indent Let $W_i = T_i ^{-1} (0)$ and $f_i = f\vert_{W_i}$. We extend $f$ using an inductive procedure. Using theorem $1$ in \cite{PV}, and the fact that $W_1$ is a uniformly flat, smooth hypersurface with $D^{+}_{\phi}(W_1) < 1$, we may extend $f_1$ to $F_1$ satisfying $\displaystyle \int _{\mathbb{C}^n} \vert F_1 \vert^2 e^{-\phi} \leq C  \int _{W_1} \vert f_1 \vert^2 e^{-\phi} < \infty$ (where $C>0$ is a constant independent of $f$). Let us assume that $f_1, \ldots, f_i$ have been extended to an entire function $F_i$ satisfying the $L^2$ estimates. Let $\tilde{f}_{i+1} = f  - F_i \vert_{W}$. If we manage to extend $\tilde{f}_{i+1}$ from $W_{1}\cup W_{2} \ldots \cup W_{i+1}$ to $\tilde{F}_{i+1}$ with good estimates, then $F_{i+1} = F_i + \tilde{F}_{i+1}$ will extend $f_1, \ldots, f_{i+1}$. \\
\indent Before we implement this strategy we need to prove a lemma regarding the norm of $F_i \vert _{W}$ 
\begin{lemma}
For any holomorphic function $F$ on $\mathbb{C}^n$ satisfying $\int _{\mathbb{C}^n} \vert F \vert ^2 e^{-\phi} \omega_0 ^n < \infty$ and any smooth, uniformly flat hypersurface $V=\mathcal{T}^{-1}(0)$
\begin{equation}
\displaystyle \int _{V} \vert F \vert^2 e^{-\phi} \omega _0 ^{n-1} \leq C \int _{\mathbb{C}^n} \vert F \vert ^2 e^{-\phi} \omega_0 ^n 
\end{equation}
for some positive constant $C$ which is independent of $F$.
\label{loop}
\end{lemma}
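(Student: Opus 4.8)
The plan is to prove a pointwise ``trace'' estimate for $|F|^2 e^{-\phi}$ along $V$ and then integrate it over $V$. Because the hypothesis of Theorem \ref{main} only controls $\sqrt{-1}\pbp\phi_r$ and not $\sqrt{-1}\pbp\phi$ itself, the first step is to invoke Remark \ref{qui}: the $L^2(e^{-\phi})$ and $L^2(e^{-\phi_r})$ norms are comparable, both on $\mathbb{C}^n$ and on $V$, so it suffices to prove the inequality with $\phi$ replaced by a $\mathcal{C}^2$ plurisubharmonic weight $\psi$ satisfying $0\le\sqrt{-1}\pbp\psi\le C\om_0$. Relabel $\psi$ as $\phi$ (and assume $F\not\equiv0$, the lemma being trivial otherwise).

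Next, fix $p\in V$ and set $v_p(z):=|F(z)|^2\exp\!\big(-\phi(z)+\tfrac{C}{2}|z-p|^2\big)$. Off the zero set of $F$ one has $\log v_p=\log|F|^2-\phi+\tfrac{C}{2}|z-p|^2$, and since $\sqrt{-1}\pbp\log|F|^2\ge0$, $\sqrt{-1}\pbp\big(\tfrac{C}{2}|z-p|^2\big)=C\om_0$ and $-\sqrt{-1}\pbp\phi\ge-C\om_0$, the form $\sqrt{-1}\pbp\log v_p$ is nonnegative; as $F^{-1}(0)$ is pluripolar, $\log v_p$ and hence $v_p$ are plurisubharmonic on $\mathbb{C}^n$. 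Applying the sub-mean value inequality on $B(p,\ep_0/2)$, and using $v_p(p)=|F(p)|^2 e^{-\phi(p)}$ together with $v_p\le|F|^2 e^{-\phi}e^{C\ep_0^2/8}$ on that ball, gives
\[
|F(p)|^2 e^{-\phi(p)}\ \le\ A\int_{B(p,\ep_0/2)}|F|^2 e^{-\phi}\,\om_0^n ,
\]
with $A$ depending only on $n$, $C$ and $\ep_0$. Integrating this over $V$ against $\om_0^{n-1}$ and switching the order of integration (Tonelli, everything being nonnegative),
\[
\int_V|F|^2 e^{-\phi}\,\om_0^{n-1}\ \le\ A\int_{\mathbb{C}^n}|F(z)|^2 e^{-\phi(z)}\,\sigma\!\big(V\cap B(z,\tfrac{\ep_0}{2})\big)\,\om_0^n(z),
\]
where $\sigma$ denotes the $\om_0^{n-1}|_V$-area. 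Thus the lemma reduces to the uniform area bound $\sup_{z\in\mathbb{C}^n}\sigma(V\cap B(z,\ep_0/2))<\infty$.

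This area bound is the heart of the matter, and the only place uniform flatness is used. If $d(z,V)\ge\ep_0/2$ the set is empty; otherwise, letting $p_0\in V$ realise $d(z,V)$, we have $V\cap B(z,\ep_0/2)\subset V\cap B(p_0,\ep_0)=:S$. Because the radius-$\ep_0$ tube about $V$ is embedded, the normal exponential map $E(p,\nu)=p+\nu$ ($\nu$ in the normal disc bundle) is injective on the radius-$\ep_0$ bundle, and its Jacobian along $V$ is $\det(I-S_\nu)$ with $S_\nu$ the shape operator in direction $\nu$; injectivity on the radius-$\ep_0$ bundle forces every principal normal curvature $\kappa_i$ to satisfy $\kappa_i\le1/\ep_0$, so each factor $1-s\kappa_i\ge\tfrac12$ for $s\le\ep_0/2$ and the Jacobian of $E$ is $\ge2^{-(2n-2)}$ on the radius-$\ep_0/2$ bundle. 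Since $E$ maps that sub-bundle over $S$ injectively into $B(p_0,3\ep_0/2)$, comparing $2n$-dimensional volumes yields $2^{-(2n-2)}\pi(\ep_0/2)^2\,\sigma(S)\le\vol\!\big(B(p_0,3\ep_0/2)\big)$, hence $\sigma(S)\le M$ for a constant $M=M(n,\ep_0)$ independent of $p_0$ (up to the dimensional constant relating $\om_0^{n-1}|_V$ to Riemannian area); this is essentially the area estimate for uniformly flat hypersurfaces from \cite{osv}. Inserting $\sigma(V\cap B(z,\ep_0/2))\le M$ into the last display proves the lemma with $C:=AM$.

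In summary, the substantive point is the area estimate in the last paragraph; the rest is routine sub-mean value and Fubini bookkeeping, the one other subtlety being that the reduction via Remark \ref{qui} in the first paragraph is what legitimises comparing $e^{-\phi}$ at nearby points using only an upper bound on $\sqrt{-1}\pbp\phi_r$.
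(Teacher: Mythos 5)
Your proof is correct, but it takes a genuinely different route from the paper's. The paper localizes: it covers $\mathbb{C}^n$ by balls of uniformly bounded overlap supplied by uniform flatness, flattens $V$ to $\{z_1=0\}$ in each ball, invokes the lemma of \cite{quimbo,li} to write $e^{-\phi}\simeq |H|^2$ with $H$ holomorphic on the ball, and applies the one-variable sub-mean value inequality to $|FH|^2$ in the normal $z_1$-disc only; integrating over the slice $V\cap B_{\al}$ and summing over the cover then gives the bound without ever needing an area estimate for $V$. You instead prove a pointwise trace inequality valid at every $p\in V$, replacing the auxiliary holomorphic $H$ by the standard quadratic-correction trick (after the reduction via Remark \ref{qui}, the function $|F|^2e^{-\phi_r+\frac{C}{2}|z-p|^2}$ is plurisubharmonic because of the upper bound on $\sqrt{-1}\pbp\phi_r$), and then Tonelli converts the lemma into the uniform local area bound $\sigma\bigl(V\cap B(z,\ep_0/2)\bigr)\le M$, which you extract from uniform flatness by the normal-tube volume comparison. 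Both arguments use the Hessian upper bound in an essential way (yours through Remark \ref{qui}, the paper's through the cited lemma); what your version buys is the elimination of the local flattening, the partition of unity, and the holomorphic square root, at the price of needing the area bound, which is where the geometric content of \cite{osv} reappears in a different guise. One point to tighten: injectivity of the normal map on the radius-$\ep_0$ bundle does not by itself force the principal curvature bound (an injective map can have critical points); you should either invoke the tubular-neighbourhood/reach fact that the normal map is a diffeomorphism onto the tube (or Federer's reach estimate), or, closer to \cite{osv}, deduce a uniform bound on the second fundamental form from the uniform graph representation $\vert g_u(x)\vert\le C_a\vert x\vert^2$; either repair only changes the constants and leaves your argument intact.
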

\begin{proof}
By uniform flatness \cite{osv} there exists an open cover of $\mathbb{C}^n$ by balls $B_{\al}$ of some positive radius lying in  $[a,2a]$ such that in $B_{\al}\cap V$, the hypersurface $V$ is locally a graph $y=g_u(x)$ over a small disc in the tangent space of any point $z$ in $B_{\al}$ such that $\vert g_u(x) \vert \leq C_a \vert x \vert^2$ where $C_a$ is independent of $z$. Moreover every point in $\mathbb{C}^n$ is contained in at most $N$ balls and similar properties hold for concentric balls with the double the radius of $B_{\al}$ (which we denote by $\overline{B}_{\al}$). Let $\rho_{\al}$ be a partition of unity subordinate to the open cover defined by the $B_{\al}$. \\
\indent Hence $\displaystyle \int _{V} \vert F \vert^2 e^{-\phi} \omega _0 ^{n-1} = \displaystyle \sum _{\al} \int _{V\cap B_{\al}} \rho _{\al} \vert F \vert^2 e^{-\phi} \omega _0 ^{n-1}$. If we manage to prove that $\displaystyle\int _{V\cap B_{\al}} \vert F \vert^2 e^{-\phi} \omega _0 ^{n-1} \leq \int _{\overline{B_{\al}}} \vert F \vert^2 e^{-\phi} \omega _0 ^{n-1}$ for all $\al$, then we will be done. \\
\indent Fix a $B_{\al}$ (whose radius is $r_{\al}$). Using uniform flatness we may assume without loss of generality that $V\cap B_{\al}$ is actually $(z_1 = 0) \cap B_{\al}$. Fix a point $z=(0,z_2,z_3,\ldots,z_n)$ in $V\cap B_{\al}$. Using the lemma from \cite{quimbo, li} mentioned earlier and uniform flatness we may assume without loss of generality that there exists a holomorphic function $H$ on $B_{\al}$ such that $e^{-\phi}(z) = \vert H(z) \vert^2$ and $\Vert e^{-\phi}/\vert H \vert^2 \Vert _{C^0 (B_{\al})} \leq C$ where $C$ is independent of the point $z$ (but $H$ can potentially depend on $z$). Thus
\begin{gather}
\displaystyle \vert F\vert ^2 e^{-\phi} (0,z_2,\ldots,z_n) = \vert F H\vert^2 (0,z_2,\ldots,z_n) \leq C \int _{\vert z_1 \vert < r_{\al}}  \vert F H\vert^2 (z_1,z_2,\ldots,z_n) \frac{\sqrt{-1}}{2} dz_1 \wedge d\bar{z}_1 \nonumber \\
\leq C \int _{\vert z_1 \vert < r_{\al}}  \vert F \vert^2 e^{-\phi} (z_1,z_2,\ldots,z_n) \frac{\sqrt{-1}}{2} dz_1 \wedge d\bar{z}_1 \nonumber
\end{gather}
Integrating over all $(0,z_2,\ldots,z_n)$ in $V\cap B_{\al}$ we see that 
\begin{gather}
\int _{V\cap B_{\al}} \vert F \vert^2 e^{-\phi} \omega _0 ^{n-1} \leq \int _{\overline{B_{\al}}} \vert F \vert^2 e^{-\phi} \omega _0 ^{n-1} \nonumber
\end{gather}
As mentioned earlier this is enough to finish the proof. 
\end{proof}
Lemma \ref{loop} implies that $\displaystyle \int _{W} \vert F_i \vert ^2 e^{-\phi} \omega_0 ^{n-1} = \sum _{j} \displaystyle \int _{W_j} \vert F_i \vert ^2 e^{-\phi} \omega_0 ^{n-1} \leq C \int _{\mathbb{C}^n} \vert F_i \vert ^2 e^{-\phi} \omega_0 ^{n}$. \\
\indent To extend $\tilde{f}_{i+1} = f-F_i \vert_{W_i}$ we use the following $L^2$-extension theorem from \cite{v-tak} :
\begin{theorem}\label{ot-thm}
Let $X$ be a Stein manifold with K\"ahler form $\omega$, $Z \subset X$ a smooth hypersurface, $e^{-\eta}$ a singular Hermitian metric for the holomorphic line bundle associated to the smooth divisor $Z$, and $T$ a holomorphic section of this line bundle such that $Z= \{T=0\}$.  Assume that $e^{-\eta}|_Z$ is still a singular Hermitian metric, and that
\[
\sup _X |T|^2 e^{-\eta} = 1.
\]
Let $H \to X$ be a holomorphic line bundle with singular Hermitian metric $e^{-\kappa}$ whose curvature $\sqrt{-1} \partial \bar{\partial} \kappa$ is non-negative in the sense of currents.  Suppose also that
\[
\sqrt{-1} \partial \bar{\partial} \kappa + {\rm Ricci}(\omega) \ge (1+\delta) \sqrt{-1} \partial \bar{\partial} \eta
\]
for some positive number $\delta$.  Then for each section $f \in H^0(Z, H)$ satisfying
\[
\int _{Z} \frac{|f|^2 e^{-\kappa}}{|dT|^2 e^{-\eta}} \frac{\omega ^{n-1}}{(n-1)!} < +\infty
\]
there is is a section $F \in H^0(X,H)$ such that
\[
F|_Z = f \quad \text{and}\quad \int _{X} |F|^2 e^{-\kappa} \frac{\omega ^n}{n!} \le \frac{C}{\delta} \int _Z  \frac{|f|^2 e^{-\kappa}}{|dT|^2 e^{-\eta}} \frac{\omega ^{n-1}}{(n-1)!},
\]
where the constant $C$ is universal.
\end{theorem}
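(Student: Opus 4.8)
The plan is to prove Theorem~\ref{ot-thm} by the twisted Bochner--Kodaira--Nakano method in the style of Ohsawa--Takegoshi, Manivel and McNeal--Varolin, packaging the extension into the solution of one $\bar\partial$-equation on a bounded pseudoconvex domain. First I would reduce to the good case. Since $X$ is Stein it carries a smooth strictly plurisubharmonic exhaustion $\rho$; writing $X_c=\{\rho<c\}$ (a smoothly bounded pseudoconvex domain) with $\bigcup_c X_c=X$, it suffices to solve the extension problem on each $X_c$ with a bound on $\int_{X_c}|F_c|^2e^{-\kappa}\,\omega^n/n!$ independent of $c$: a weak-$L^2$ limit of the $F_c$ then exists, is holomorphic by Montel, restricts to $f$ on $Z$, and inherits the estimate by lower semicontinuity of the norm. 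On each $X_c$ one further writes $\kappa$ (which is plurisubharmonic, as $\sqrt{-1}\pbp\kappa\ge0$) and $\eta$ as decreasing limits of smooth weights $\kappa_j\downarrow\kappa$, $\eta_j\downarrow\eta$ on a slightly smaller domain (Demailly--Richberg regularisation on Stein domains), at the cost of replacing $\delta$ by $\delta/2$ in the curvature hypothesis on the interior; proving the theorem for smooth weights with strict curvature and then letting $j\to\infty$ (monotone convergence on the right, Fatou on the left) recovers the general case. So we may assume $X$ is a bounded smooth pseudoconvex domain with a K\"ahler form $\omega$, that $\kappa,\eta$ are smooth, and that $\Theta:=\sqrt{-1}\pbp\kappa+\mathrm{Ricci}(\omega)\ge(1+\delta)\sqrt{-1}\pbp\eta$ with, after shrinking $\delta$ once more, $\Theta>0$.

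Next I would set up the extension as a corrected smooth extension. Normalise $\psi:=\log\bigl(|T|^2e^{-\eta}\bigr)\le0$; off $Z$ one has $\sqrt{-1}\pbp\psi=-\sqrt{-1}\pbp\eta$ (Lelong--Poincar\'e), while $\psi\to-\infty$ along $Z$. View sections of $H$ as $(n,0)$-forms with values in $H\otimes K_X$, so the relevant metric is $e^{-\kappa}$ times the $\omega$-induced metric on $K_X$, whose curvature is exactly $\Theta$. Fix a smooth (\emph{not} holomorphic) extension $\hat f$ of $f$ to a neighbourhood of $Z$ coming from the local product structure of $Z$, and for small $\varepsilon>0$ let $\chi_\varepsilon$ be a cutoff, a function of $\psi$, equal to $1$ on $\{\psi<2\log\varepsilon\}$ and to $0$ on $\{\psi>2\log(2\varepsilon)\}$; set $\tilde f:=\chi_\varepsilon\hat f$. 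Then $\tilde f$ is a global smooth section agreeing with $f$ on $Z$, and $v:=\bar\partial\tilde f=(\bar\partial\chi_\varepsilon)\hat f+\chi_\varepsilon\,\bar\partial\hat f$ is an $(n,1)$-form supported in the annulus $A_\varepsilon:=\{2\log\varepsilon\le\psi\le 2\log(2\varepsilon)\}$ that vanishes along $Z$ (because $\bar\partial\chi_\varepsilon$ is supported off $Z$ and $\bar\partial\hat f$ vanishes on $Z$). I then seek $u$ with $\bar\partial u=v$ and, crucially, $\int_X|u|^2e^{-\kappa-\psi}\,\omega^n/n!<\infty$; since $e^{-\psi}$ carries a $|T|^{-2}$ pole along $Z$, this finiteness forces $u|_Z=0$, whence $F:=\tilde f-u$ is holomorphic, extends $f$, and has controlled norm once $u$ does.

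The heart of the argument is the twisted estimate and the residue bound. I would apply the twisted Bochner--Kodaira--Nakano inequality for $H\otimes K_X$-valued $(n,1)$-forms $\alpha$ with weight $\varphi$ and a pair of positive functions $\tau=\tau(\psi)$, $A=A(\psi)$:
\[
\int_X(\tau+A)\,|\bar\partial^\ast_\varphi\alpha|^2e^{-\varphi}+\int_X\tau\,|\bar\partial\alpha|^2e^{-\varphi}\ \ge\ \int_X\bigl\langle\bigl[\tau\Theta-\sqrt{-1}\pbp\tau-A^{-1}\sqrt{-1}\partial\tau\wedge\bar\partial\tau,\ \Lambda_\omega\bigr]\alpha,\alpha\bigr\rangle e^{-\varphi}.
\]
Choosing $\tau,A$ as in Ohsawa--Takegoshi (a suitable convex/concave pair depending on $\psi$ over the bounded range relevant here) one arranges that the bracketed curvature operator $B$ is $\ge$ a positive multiple of $\delta\,\tau\,\sqrt{-1}\pbp\eta$, hence $>0$ --- here one uses $\tau\Theta\ge-(1+\delta)\tau\sqrt{-1}\pbp\psi$ off $Z$ together with $-\sqrt{-1}\pbp\tau-A^{-1}\sqrt{-1}\partial\tau\wedge\bar\partial\tau\ge(\text{a factor}<1)\cdot\sqrt{-1}\pbp\psi$. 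By the H\"ormander--Andreotti--Vesentini duality argument this solves $\bar\partial u=v$ (the orthogonality $v\perp\ker\bar\partial^\ast$ being automatic, $v$ being $\bar\partial$-exact and $\bar\partial$-closed) with
\[
\int_X\frac{|u|^2e^{-\varphi}}{\tau+A}\ \le\ \int_X\bigl\langle B^{-1}v,v\bigr\rangle e^{-\varphi},
\]
and, $\tau+A$ being bounded on $A_\varepsilon$ while $\varphi$ reduces there to $\kappa+\psi$ up to a bounded factor, this is exactly the bound $\int_X|u|^2e^{-\kappa-\psi}<\infty$ needed above. Since $v$ is supported in $A_\varepsilon$, where $e^\psi\asymp\varepsilon^2$, and $\bar\partial\chi_\varepsilon$ is controlled by $\sqrt{-1}\partial\psi\wedge\bar\partial\psi$ with the extra factor absorbed by the density $(\tau+A)^{-1}$ and the curvature gap $\delta$, the right-hand side is, after letting $\varepsilon\to0$ and using the first-order vanishing of $\hat f-f$ along $Z$, bounded by a universal multiple of $\delta^{-1}\int_Z\frac{|f|^2e^{-\kappa}}{|dT|^2e^{-\eta}}\frac{\omega^{n-1}}{(n-1)!}$: this is the coarea/residue computation that produces both the denominator $|dT|^2e^{-\eta}$ and the constant $C/\delta$. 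Combining with the trivial bound of $\int_X|\tilde f|^2e^{-\kappa}\,\omega^n/n!$ by the same right-hand side and setting $F=\tilde f-u$ gives the theorem.

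\textbf{Main obstacle.} The crux is the last step: the simultaneous choice of the twist functions $\tau(\psi),A(\psi)$ so that the ``bad'' Hessian term $-\sqrt{-1}\pbp\tau-A^{-1}\sqrt{-1}\partial\tau\wedge\bar\partial\tau$ is dominated by a factor \emph{strictly} less than one times $\sqrt{-1}\pbp\psi$ --- this is precisely where only the strict inequality ($1+\delta$, with $\delta$ possibly tiny) is available and where the universal constant $C$ together with the factor $\delta^{-1}$ are generated --- coupled with the delicate $\varepsilon\to0$ limit of the error term on $A_\varepsilon$, which must be handled with care because $\psi\to-\infty$ along $Z$ and $e^{-\eta}$ is only a singular metric there. (In the Berndtsson--B\l{}ocki--Guan--Zhou formulation the same obstacle is packaged into an auxiliary ODE for the optimal ``denominator'' and the verification of the endpoint constant; either way the curvature bookkeeping near $Z$ is the heart of the matter, while the Stein-exhaustion and metric-regularisation steps are routine.)
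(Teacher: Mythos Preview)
The paper does not prove this theorem at all: it is quoted verbatim from \cite{v-tak} and used as a black box in the proof of Theorem~\ref{main}. So there is no ``paper's own proof'' to compare against.

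That said, your outline is the standard route by which such Ohsawa--Takegoshi--type extension theorems are established (and is, in broad strokes, the argument of \cite{v-tak} itself): Stein exhaustion and regularisation to reduce to smooth data on a bounded pseudoconvex domain, a cutoff extension $\tilde f=\chi_\varepsilon\hat f$, the twisted Bochner--Kodaira--Nakano inequality with twist functions $\tau(\psi),A(\psi)$ of $\psi=\log(|T|^2e^{-\eta})$, solving $\bar\partial u=\bar\partial\tilde f$ with the singular weight $e^{-\kappa-\psi}$ forcing $u|_Z=0$, and the residue/coarea computation as $\varepsilon\to0$ producing the denominator $|dT|^2e^{-\eta}$ and the factor $C/\delta$. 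Your identification of the main obstacle --- the choice of $\tau,A$ so that the defect $-\sqrt{-1}\pbp\tau-A^{-1}\sqrt{-1}\partial\tau\wedge\bar\partial\tau$ is dominated by strictly less than $\sqrt{-1}\pbp\psi$, leaving the $\delta$-gap to absorb the error --- is exactly right. One small caution: you write ``$\bar\partial\hat f$ vanishes on $Z$'' to justify that $v$ vanishes along $Z$, but in the argument as you have set it up the vanishing of $u|_Z$ comes from the finiteness of $\int|u|^2e^{-\kappa-\psi}$, not from $v|_Z=0$; the support of $v$ in the annulus $A_\varepsilon$ (away from $Z$) is what matters there, and the first-order vanishing of $\bar\partial\hat f$ along $Z$ is used only later in the $\varepsilon\to0$ estimate.
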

By lemma $4.15$ of \cite{PV} we see that $\tilde{f}_{i+1}$ is divisible by $S_i= T_1 T_2 \ldots T_i$. The following lemma is crucial.
\begin{lemma}
\begin{gather}
\displaystyle\int _{W_{i+1}} \frac{\left \vert \frac{\tilde{f}_{i+1}}{S_i}  \right \vert^2 e^{-\phi}}{\vert dT _{i+1} \vert^2 e^{-\fint _{B(z,r)} \ln (\vert T_1 \ldots T_{i+1} \vert ^2)}}\omega_0 ^{n-1} \leq C \displaystyle\int _{W_{i+1}} \left \vert \tilde{f}_{i+1}  \right \vert^2 e^{-\phi}\omega_0 ^{n-1}\nonumber
\end{gather}
where $C>0$ is a constant independent of $\tilde{f}_{i+1}$.
\label{est}
\end{lemma}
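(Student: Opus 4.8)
The plan is to prove Lemma~\ref{est} as a \emph{local} estimate on balls of radius comparable to the uniform-flatness constant $\epsilon_0$ of $W_{i+1}$, reducing it in each ball to an elementary inequality about holomorphic functions. First I would use lemma~$4.15$ of \cite{PV} to write $\tilde f_{i+1}=S_i\,g$ on $W$ with $g$ holomorphic, so that the integrand on the left of the lemma becomes
\[
\frac{|g|^2 e^{-\phi}}{|dT_{i+1}|^2}\,\exp\!\Big(\fint_{B(z,r)}\ln|T_1\cdots T_{i+1}|^2\Big)
=\frac{e^{u_{i+1}}}{|dT_{i+1}|^2}\;|g|^2 e^{-\phi}\prod_{j=1}^{i}e^{u_j},
\qquad u_j(z):=\fint_{B(z,r)}\ln|T_j|^2 .
\]
I would then record the two inputs I need from uniform flatness: (i) the adjunction-type estimate $|dT_{i+1}|^2 e^{-u_{i+1}}\asymp 1$ on $W_{i+1}$ (constants depending only on $\epsilon_0,C,n$), so the first factor is harmless; and (ii) since $\sqrt{-1}\pbp u_j=\Upsilon_r^{W_j}$ is bounded, $u_j$ is globally Lipschitz with uniform constant and $e^{-u_j}$ admits a local holomorphic square root with uniformly bounded ratios, exactly as in the proof of Lemma~\ref{loop} (via \cite{quimbo,li}); the square root is nowhere zero because $u_j$ is finite.

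Next I would fix a cover of $W_{i+1}$ by balls $B_\alpha$ (with concentric doubles $B'_\alpha$) of radius $\asymp\epsilon_0$, of bounded overlap and with the uniform graph property, so that it suffices to bound $\int_{B_\alpha\cap W_{i+1}}|g|^2 e^{-\phi}\prod_{j\le i}e^{u_j}$ by $C\int_{B'_\alpha\cap W_{i+1}}|S_i|^2|g|^2 e^{-\phi}$ with $C$ independent of $\alpha$ and $g$. On $B'_\alpha$ I would substitute $e^{-\phi}=|H|^2$ and $e^{-u_j}=|K_j|^2$ (uniformly bounded ratios) and, for each $j\le i$ with $W_j\cap B'_\alpha\ne\emptyset$, a local coordinate $w_j$ on $W_{i+1}$ cutting out $W_j\cap W_{i+1}$; here uniform flatness of the \emph{singular} hypersurface $W$ is used decisively, since the angle condition lets the $w_j$ be chosen uniformly inside a graph coordinate system and forces $|T_j|^2 e^{-u_j}\asymp|w_j|^2$ on $B'_\alpha\cap W_{i+1}$ (in the degenerate case $W_j\cap W_{i+1}=\emptyset$ one has $|T_j|^2 e^{-u_j}$ bounded below by a constant $\asymp\epsilon_0^2$ and $|w_j|$ is simply replaced by that constant). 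After these substitutions the inequality collapses to $\int_{B_\alpha\cap W_{i+1}}|\Phi|^2\le C\int_{B'_\alpha\cap W_{i+1}}|\Phi|^2\prod_j|w_j|^2$ for the holomorphic function $\Phi=gH/\prod_{j\le i}K_j$, which I would prove by expanding $\Phi$ in a Taylor series in the $w_j$ and integrating term by term: every monomial gains a factor $\gtrsim\epsilon_0^2$ on the right for each $w_j$, which more than compensates the loss from shrinking $B'_\alpha$ to $B_\alpha$ and accounts for the (harmless) power of $\epsilon_0$ in the final constant. Summing over $\alpha$ with bounded overlap finishes the proof.

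The hard part is the behaviour near the intersection loci $W_j\cap W_{i+1}$. There the pointwise ratio of the two integrands in the lemma is genuinely unbounded---it blows up like $|w_j|^{-2}$---so no pointwise bound is available and one is forced into the Taylor-expansion argument, whose success relies on $g$ itself (not merely $|g|^2$) being holomorphic. The real technical content is in verifying the two ``$\asymp 1$'' estimates, $|dT_{i+1}|^2 e^{-u_{i+1}}\asymp1$ and $|T_j|^2 e^{-u_j}\asymp|w_j|^2$, with constants that do not deteriorate along $W_{i+1}$ or along $W_j\cap W_{i+1}$---this is precisely what uniform flatness (a tube of fixed radius $\epsilon_0$, and intersection angles bounded away from $0$ and $\pi$) together with the bound $\sqrt{-1}\pbp\phi_r\le C\omega$ are there to supply.
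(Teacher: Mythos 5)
Your proposal is correct in substance and follows the same skeleton as the paper's argument: split off the factor $\vert dT_{i+1}\vert^2 e^{-u_{i+1}}$ and bound it below by uniform flatness, reduce the remaining factor $\vert S_i\vert^2 e^{-\sum_{j\le i}u_j}$ to a product of local vanishing coordinates along the loci $W_j\cap W_{i+1}$, localize on a uniformly overlapping cover of balls of radius $\asymp\epsilon_0$, exploit that $\tilde f_{i+1}/S_i$ is holomorphic (Lemma 4.15 of \cite{PV}), and sum. Where you differ is in how the local step is discharged: the paper compares $\fint_{B(z,a)}$ with $\fint_{B(z,r)}$ averages, replaces each $T_u$ by the graph defining function $y-g_u(x)$, and then quotes Lemma 4.16 of \cite{PV} to absorb the resulting denominators $\prod_u\vert y-g_u(x)\vert^2$ using the vanishing of $\tilde f_{i+1}$ along the $W_u$; you instead apply the Berndtsson--Ortega Cerd\`a/Lindholm substitution to $e^{-u_j}$ itself (legitimate here because $\sqrt{-1}\pbp u_j=\Upsilon_r^{W_j}\le\Upsilon_r^W\le\sqrt{-1}\pbp\phi_r\le C\omega$ for $r\gg1$ --- note this uses the density hypothesis, not uniform flatness alone) and then prove the resulting Schwarz-type $L^2$ inequality $\int_{B_\alpha\cap W_{i+1}}\vert\Phi\vert^2\le C\int_{B'_\alpha\cap W_{i+1}}\vert\Phi\vert^2\prod_j\vert w_j\vert^2$ directly. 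This buys independence from \cite{PV} Lemma 4.16 and avoids the radius-$a$ versus radius-$r$ comparison, at the cost of having to establish the comparison $\vert T_j\vert^2e^{-u_j}\gtrsim\vert w_j\vert^2$ on $W_{i+1}$ yourself (only this direction is needed), which is essentially the Lemma 4.11/4.13-type content of \cite{PV} plus the angle condition.

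Two points need tightening. First, the ``Taylor expansion and term-by-term integration'' proof of the local inequality only works verbatim when the divisors $\{w_j=0\}$ pass through the center of $B_\alpha$ (so that monomials are orthogonal); in general the cross terms do not vanish, and you should instead iterate a one-divisor estimate (apply it to $\Phi$, then to $\Phi w_1$, etc.), each instance following from subharmonicity of $\vert\cdot\vert^2$ and monotonicity of circular means, which gives a constant $C(\epsilon_0)^i$ --- harmless since $i\le k$. Second, your dichotomy (``$W_j\cap W_{i+1}$ meets $B'_\alpha$'' versus ``$W_j\cap W_{i+1}=\emptyset$'') is not exhaustive: a priori $W_j$ could pass within $o(\epsilon_0)$ of $B'_\alpha\cap W_{i+1}$ while its intersection with $W_{i+1}$ lies far away, and then neither a vanishing coordinate $w_j$ nor a uniform lower bound on $\vert T_j\vert^2e^{-u_j}$ is available from your two cases. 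Ruling this out requires the full formulation of uniform flatness for singular hypersurfaces from \cite{PV} (which is what the paper implicitly leans on through Lemmas 4.11 and 4.16), so you should state explicitly that near-tangential approaches away from the intersection locus are excluded by that definition, rather than leaving it under the blanket appeal to uniform flatness.
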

\begin{proof}
The integral may be written as
\begin{gather}
\displaystyle\int _{W_{i+1}} \frac{\left \vert \tilde{f}_{i+1}  \right \vert^2 e^{-\phi}}{\vert S_i \vert ^2 e^{-\fint _{B(z,r)} \ln (\vert S_i \vert ^2)} \vert dT _{i+1} \vert^2 e^{-\fint _{B(z,r)} \ln (\vert T_{i+1} \vert ^2)}}\omega_0 ^{n-1} \nonumber
\end{gather}
where we recall that $S_i = T_1 \ldots T_i$. The function $\vert dT _{i+1} \vert^2 e^{-\fint _{B(z,r)} \ln (\vert T_{i+1} \vert ^2)}$ is bounded below by virtue of uniform flatness (lemma $4.11$ in \cite{PV}). The expression $\vert S_i \vert ^2 e^{-\fint _{B(z,r)} \ln (\vert S_i \vert ^2)}$ may be written as
\begin{gather}
\displaystyle \prod _{u=1} ^{u=i}\left ( \vert T_u \vert ^2 e^{-\fint _{B(z,a)} \ln (\vert T_u \vert ^2)} \right )e^{\fint _{B(z,a)} \ln (\vert S_i \vert ^2)-\fint _{B(x,r)} \ln (\vert S_i \vert ^2)} \nonumber
\end{gather}
 where $a$ is chosen to be so small that $W_u ^{-1} (0) \ \forall \ u=1,2,\ldots i$ is (as earlier) a graph $y=g_u(x)$ over a small disc (of radius $>a$) in the tangent space of $z$ such that $\vert g_u (x)\vert \leq  C_a \vert x \vert^2$ where $C_a$ is independent of $z$ (once again by uniform flatness \cite{osv}). 

Since expressions of the form $\vert T\vert ^2 e^{-\fint _{B(z,a)} \ln (\vert T\vert ^2) d\mu(\tilde{z})}$ are independent of the choice of defining function $T$, we may replace $T_u$ in the expression above with $y(\tilde{z})-g_u(x(\tilde{z}))$.( Notice that the $\tilde{z}$-coordinates may be changed to the $(y,x)$-coordinates by means of an affine isometry.) It is easy to see  that $e^{-\fint _{B(z,a)} \ln (\vert y - g_u(x)\vert ^2)}$ is uniformly bounded below independent of $z$. So we infer that
\begin{gather}
\displaystyle\int _{W_{i+1}} \frac{\left \vert \tilde{f}_{i+1}  \right \vert^2 e^{-\phi}}{\vert S_i \vert ^2 e^{-\fint _{B(z,r)} \ln (\vert S_i \vert ^2)} \vert dT _{i+1} \vert^2 e^{-\fint _{B(z,r)} \ln (\vert T_{i+1} \vert ^2)}}\omega_0 ^{n-1} \nonumber \\ \leq  C\displaystyle\int _{W_{i+1}} \frac{\left \vert \tilde{f}_{i+1}  \right \vert^2 e^{-\phi}}{\displaystyle \prod _{u=1} ^{u=i}\left ( \vert y(z) - g_u (x(z)) \vert ^2 \right ) e^{\fint _{B(z,a)} \ln (\vert S_i \vert ^2)-\fint _{B(z,r)} \ln (\vert S_i \vert ^2)}}\omega_0 ^{n-1}
\label{sofarone}
\end{gather}
The factor $\displaystyle e^{\fint _{B(z,a)} \ln (\vert S_i \vert ^2)-\fint _{B(z,r)} \ln (\vert S_i \vert ^2)}$ is seen to be uniformly bounded below using the proof of lemma $4.11$ in \cite{PV}. So far we have
\begin{gather}
\displaystyle\int _{W_{i+1}} \frac{\left \vert \frac{\tilde{f}_{i+1}}{S_i}  \right \vert^2 e^{-\phi}}{\vert dT _{i+1} \vert^2 e^{-\fint _{B(z,r)} \ln (\vert T_1 \ldots T_{i+1} \vert ^2)}}\omega_0 ^{n-1} \leq C \displaystyle\int _{W_{i+1}} \frac{\left \vert \tilde{f}_{i+1}  \right \vert^2 e^{-\phi}}{\displaystyle \prod _{u=1} ^{u=i}\left ( \vert y(z) - g_u (x(z)) \vert ^2 \right )}\omega_0 ^{n-1}
\label{sofartwo}
\end{gather}
\indent At this juncture choose an open cover of $W_{i+1}$ by balls $B_{\alpha}$ in $\mathbb{C}^n$ of a sufficiently small radius $\frac{\epsilon_0}{2} \leq b_{\alpha} \leq (k+1)\epsilon _0$ where $k$ is a constant. In fact, by virtue of uniform flatness one may choose these balls so that uniform paracompactness holds \cite{PV}. Extend this to an open cover of $\mathbb{C}^n$ with the same properties. Choose a partition of unity $\rho_{\al}$ subordinate to this open cover. Inequality \ref{sofartwo} may be written as
\begin{gather}
\displaystyle\int _{W_{i+1}} \frac{\left \vert \frac{\tilde{f}_{i+1}}{S_i}  \right \vert^2 e^{-\phi}}{\vert dT _{i+1} \vert^2 e^{-\fint _{B(z,r)} \ln (\vert T_1 \ldots T_{i+1} \vert ^2)}}\omega_0 ^{n-1} \leq C \displaystyle \sum _{\al} \int _{W_{i+1}\cap B_{\alpha}} \rho_{\al}\frac{\left \vert \tilde{f}_{i+1}  \right \vert^2 e^{-\phi}}{\displaystyle \prod _{u=1} ^{u=i}\left ( \vert y(z) - g_u (x(z)) \vert ^2 \right )}\omega_0 ^{n-1}
\label{sofarthree}
\end{gather}
Using lemma $4.16$ of \cite{PV} we see that, at the cost of increasing the radius of the balls $B_{\alpha}$ by a factor to get new, bigger balls $\hat{B}_{\alpha}$, we have
\begin{gather}
\displaystyle\int _{W_{i+1}} \frac{\left \vert \frac{\tilde{f}_{i+1}}{S_i}  \right \vert^2 e^{-\phi}}{\vert dT _{i+1} \vert^2 e^{-\fint _{B(z,r)} \ln (\vert T_1 \ldots T_{i+1} \vert ^2)}}\omega_0 ^{n-1} \leq C \displaystyle \sum _{\al} \int _{W_{i+1}\cap \hat{B}_{\alpha}} \left \vert \tilde{f}_{i+1}  \right \vert^2 e^{-\phi}\omega_0 ^{n-1}
\label{sofarthree}
\end{gather}
Since each point of $\mathbb{C}^n$ is contained in a finite number of balls (let us say $N$),
\begin{gather}
\displaystyle\int _{W_{i+1}} \frac{\left \vert \frac{\tilde{f}_{i+1}}{S_i}  \right \vert^2 e^{-\phi}}{\vert dT _{i+1} \vert^2 e^{-\fint _{B(z,r)} \ln (\vert T_1 \ldots T_{i+1} \vert ^2)}}\omega_0 ^{n-1} \leq CN \int _{W_{i+1}} \left \vert \tilde{f}_{i+1}  \right \vert^2 e^{-\phi}\omega_0 ^{n-1}
\label{sofarfour}
\end{gather}
\end{proof}

\emph{Proof of theorem \ref{main}}: We now extend $\tilde{f}_{i+1}$ from $Z_{i+1}=W_{1}\cup W_2 \ldots \cup W_{i+1}$ to an entire function $\tilde{F}_{i+1}$ in $\mathbb{C}^n$ using theorem \ref{ot-thm} by choosing $X$ to be $\mathbb{C}^n$, $Z=Z_{i+1}$, $\kappa = \phi_r$ (with the understanding that according to remark \ref{qui} the norms defined by $\phi_r$ and $\phi$ are equivalent), $H$ to be trivial,  and $\eta = \fint _{B(z,r)} \ln (\vert T_1 \ldots T_{i+1} \vert ^2) $. The density condition ensures that the curvature condition in theorem \ref{ot-thm} is satisfied. The only hypotheses that of theorem \ref{ot-thm} that needs to be checked now is the finiteness of $\int _Z  \frac{|f|^2 e^{-\kappa}}{|dT|^2 e^{-\eta}} \frac{\omega ^{n-1}}{(n-1)!}$. Indeed in our case,
\begin{gather}
\int _Z  \frac{|f|^2 e^{-\kappa}}{|dT|^2 e^{-\eta}} \omega ^{n-1} = \displaystyle \sum _{j=1} ^{i+1}  \int _{W_{j}} \frac{\left \vert \tilde{f}_{i+1}  \right \vert^2 e^{-\phi_r}}{\vert dT \vert^2 e^{-\fint _{B(z,r)} \ln (\vert T_1 \ldots T_{i+1} \vert ^2)}}\omega_0 ^{n-1} \nonumber 
= \int _{W_{i+1}} \frac{\left \vert \frac{\tilde{f}_{i+1}}{S_i}  \right \vert^2 e^{-\phi_r}}{\vert dT _{i+1} \vert^2 e^{-\fint _{B(z,r)} \ln (\vert T_1 \ldots T_{i+1} \vert ^2)}}\omega_0 ^{n-1} \nonumber 
\end{gather}
which by lemma \ref{est} and remark \ref{qui} is bounded above by $C\displaystyle \int _{W_{i+1}}\vert \tilde{f} _{i+1} \vert^2 e^{-\phi} \omega_{0} ^{n-1}\leq C \int _{W}(\vert f \vert^2 + \vert  F_i \vert^2)e^{-\phi} \omega_{0} ^{n-1}$ for some constant $C>0$. \\
\indent By lemma \ref{loop} we may conclude that $\int _{W}\vert  F_i \vert^2e^{-\phi} \omega_{0} ^{n-1} \leq C \int _{\mathbb{C}^n}\vert F_i \vert^2 e^{-\phi} \omega_{0} ^{n}$ which is in turn less than (by the inductive hypothesis) $C\int _{\mathbb{C}^n}\vert f \vert^2 e^{-\phi} \omega_{0} ^{n}$. \\ 
\indent As mentioned earlier, the definition $F_{i+1} = F_{i} + \tilde{F}_{i+1}$ completes the inductive step. Hence we are done. \qed

\end{document}